\documentclass{amsart}
\usepackage{fixltx2e}
\usepackage[utf8]{inputenc}
\usepackage[T1]{fontenc}
\usepackage{amssymb,amsthm,amsmath}
\usepackage[english]{babel}
\usepackage[all]{xy}
\usepackage{calrsfs}
\usepackage{hyperref}
\usepackage{mathbbol}
\usepackage{mathabx}
\usepackage{xcolor}

\theoremstyle{plain}
\newtheorem{theorem}[subsection]{Theorem}
\newtheorem{lemma}[subsection]{Lemma}
\newtheorem{proposition}[subsection]{Proposition}
\newtheorem{corollary}[subsection]{Corollary}

\theoremstyle{definition}
\newtheorem{definition}[subsection]{Definition}

\theoremstyle{remark}
\newtheorem{remark}[subsection]{Remark}

\newenvironment{tfae}
{
\begin{enumerate}}
{\end{enumerate}}

\newcommand{\comp}{\circ}
\newcommand{\defn}{\textbf}
\newcommand{\join}{\vee}
\newcommand{\tensor}{\otimes}

\DeclareMathOperator{\Kernel}{Ker}
\newcommand{\Ker}{\Kernel}

\newcommand{\C}{\ensuremath{\mathcal{C}}}

\newcommand{\K}{\ensuremath{\mathbb{K}}}

\newcommand{\coc}{\mathit{coc}}

\newcommand{\BiAlg}{\ensuremath{\mathsf{BiAlg}}}
\newcommand{\BiAlgcoc}{\ensuremath{\mathsf{BiAlg}_{\K, \coc}}}
\newcommand{\CoAlg}{\ensuremath{\mathsf{CoAlg}}}
\newcommand{\Mon}{\ensuremath{\mathsf{Mon}}}
\newcommand{\Gp}{\ensuremath{\mathsf{Gp}}}
\newcommand{\Hopf}{\ensuremath{\mathsf{Hopf}}}
\newcommand{\Set}{\ensuremath{\mathsf{Set}}}

\newdir{>>}{{}*!/3.5pt/:(1,-.2)@^{>}*!/3.5pt/:(1,+.2)@_{>}*!/7pt/:(1,-.2)@^{>}*!/7pt/:(1,+.2)@_{>}}
\newdir{ >>}{{}*!/8pt/@{|}*!/3.5pt/:(1,-.2)@^{>}*!/3.5pt/:(1,+.2)@_{>}}
\newdir{ |>}{{}*!/-3.5pt/@{|}*!/-8pt/:(1,-.2)@^{>}*!/-8pt/:(1,+.2)@_{>}}
\newdir{ >}{{}*!/-8pt/@{>}}
\newdir{>}{{}*:(1,-.2)@^{>}*:(1,+.2)@_{>}}
\newdir{<}{{}*:(1,+.2)@^{<}*:(1,-.2)@_{<}}

\def\pullback{
 \ar@{-}[]+R+<6pt,-1pt>;[]+RD+<6pt,-6pt>%
 \ar@{-}[]+D+<1pt,-6pt>;[]+RD+<6pt,-6pt>}

\def\ophalfsplitpullback{%
 \ar@{-}[]+R+<6pt,-1pt>;[]+RD+<6pt,-6pt>%
 \ar@{-}[]+D+<.5ex,-6pt>;[]+RD+<6pt,-6pt>}

\begin{document}

\title{A note on split extensions of bialgebras}
\author{Xabier García-Martínez}
\address[Xabier García-Martínez]{Department of Algebra, University of Santiago de Compostela, 15782 Santiago de Compostela, Spain.}
\email{xabier.garcia@usc.es}

\author{Tim Van~der Linden}
\address[Tim Van~der Linden]{Institut de
Recherche en Math\'ematique et Physique, Universit\'e catholique
de Louvain, che\-min du cyclotron~2 bte~L7.01.02, B--1348
Louvain-la-Neuve, Belgium}
\email{tim.vanderlinden@uclouvain.be}

\thanks{This work was partially supported by Ministerio de Economía y Competitividad (Spain), grant MTM2016-79661-P (AEI/FEDER, UE, support included). The first author was also supported by Xunta de Galicia, grant GRC2013-045 (European FEDER support included), by an FPU scholarship of the Ministerio de Educación, Cultura y Deporte (Spain) and by a Fundaci\'on Barri\'e scholarship. The second author is a Research Associate of the Fonds de la Recherche Scientifique--FNRS}

\begin{abstract}
We prove a universal characterization of Hopf algebras among cocommutative bialgebras over an algebraically closed field: a cocommutative bialgebra is a Hopf algebra precisely when every split extension over it admits a join decomposition. We also explain why this result cannot be extended to a non-cocommutative setting.
\end{abstract}


\subjclass[2010]{16T05, 18A40, 18E99, 18G15, 20J15} 


\maketitle

\section{Introduction}
An elementary result in the theory of modules says that in any short exact sequence 
\[
\xymatrix{0 \ar[r] & K \ar[r]^-{k} & X \ar@<.5ex>[r]^-{f} & Y \ar@{->}@<.5ex>[l]^-{s} \ar[r]& 0}
\qquad\qquad
f\comp s=1_{Y}
\]
where the cokernel $f$ admits a section $s$, the middle object $X$ decomposes as a direct sum $X\cong K\oplus Y$. If, however, the given sequence is a short exact sequence of, say, groups or Lie algebras, then this is of course no longer true: then we can at most deduce that $X$ is a semidirect product $K\rtimes Y$ of $K$ and $Y$. In a fundamental way, this interpretation depends on, or even amounts to, the fact that $X$ is generated by its subobjects $k(K)$ and $s(Y)$. One may argue that, in a non-additive setting, the join decomposition $X=k(K)\join s(Y)$ in the lattice of subobjects of~$X$ is what replaces the direct sum decomposition, valid for split extensions of modules. 

When the given split extension is a sequence of cocommutative bialgebras (over a commutative ring with unit~$\K$), we may ask ourselves the question whether such a join decomposition of the middle object in the sequence always exists. Although kernels are not as nice as one could expect \cite{Agore,AndDev}, it is not difficult to see that \emph{if $Y$ is a Hopf algebra} then the answer is yes. 

The main point of this note is that this happens \emph{only then}, at least when~$\K$ is an algebraically closed field. We shall prove, in other words, the following new universal characterization of cocommutative Hopf algebras among cocommutative bialgebras over $\K$: 
\begin{quote}
\emph{All split extensions over a bialgebra $Y$ admit a join decomposition if and only if $Y$ is a Hopf algebra.}
\end{quote}
This result is along the lines of, and is actually a variation on, a similar characterization of groups among monoids, recently obtained in~\cite{MRVdL-TCOGAM,GM-ACS}. There the authors show that all split extensions (of monoids) over a monoid $Y$ admit a join decomposition if and only if $Y$ is a group. 

In fact something stronger than the existence of a join decomposition may be proved in a more general context; this will be the subject of Section~\ref{Internal monoids}, where we explore some basic aspects of split extensions of cocommutative bialgebras. In particular, we show that over a Hopf algebra, all split extensions of cocommutative bialgebras admit a join decomposition (Corollary~\ref{Corollary Join Decomposition}). In Section~\ref{Characterization} we focus on the other implication and prove that among cocommutative bialgebras over an algebraically closed field, only Hopf algebras admit join decompositions of their split extensions (Theorem~\ref{Theorem 2}). In the final Section~\ref{Cocommutativity} we explain why the constraint that the bialgebras in this characterization are cocommutative is essential. As it turns out, in a non-cocommutative setting, even the very weakest universal join decomposition condition is too strong.

\section{Split extensions over Hopf algebras}\label{Internal monoids}
A \defn{split extension} in a pointed category with finite limits $\C$ is a diagram
\[
\xymatrix{K \ar[r]^-{k} & X \ar@<.5ex>[r]^-{f} & Y \ar@{->}@<.5ex>[l]^-{s}}
\]
where $k$ is a kernel and $s$ is a section of $f$. So $f\comp s=1_{Y}$, but a priori we are not asking that $f$ is a cokernel of $k$, so that $(k,f)$ is a short exact sequence, and this is not automatically the case. We do always have that $K$ and $Y$, considered as subobjects of $X$, have a trivial intersection. Indeed, using that $k$ is the pullback of~$0\to Y$ along $f$, it is easy to check that the pullback of~$k$ and~$s$ is zero.

In this general context, a join of two subobjects may not always exist, but the concept introduced in the next definition expresses what we want, and agrees with the condition that $X=k(K)\join s(Y)$ whenever that expression makes sense---as it does in any regular category with binary coproducts, for instance~\cite{Borceux-Bourn}.

\begin{definition}
A pair of arrows $(k, s)$ with the same codomain $X$ is \defn{jointly extremally epimorphic} when the arrows $k$ and $s$ cannot both factor through one and the same proper subobject of $X$: whenever we have a diagram
\[
\xymatrix@!0@=4em{ & M \ar@{ >->}[d]^- m \\
 K \ar[r]_-k \ar[ur] & X & Y \ar[l]^-s \ar[ul]}
\]
where $m$ is a monomorphism, necessarily $m$ is an isomorphism. We say that a split extension as above is \defn{strong} when $(k,s)$ is a jointly extremally epimorphic pair; the couple $(f,s)$ is then called a \defn{strong point}. When we say that a split extension \defn{admits a join decomposition}, we mean that it is strong. 

The given split extension is said to be \defn{stably strong} (the couple $(f,s)$ is a \defn{stably strong point}) when all of its pullbacks (along any morphism $g\colon W\to Y$) are strong. Following \cite{MRVdL-TCOGAM}, we say that an object $Y$ is \defn{protomodular} when all split extensions over $Y$ are stably strong.
\end{definition}

\begin{remark}
It is easily seen~\cite{MRVdL-TCOGAM} that the split epimorphism $f$ in a strong point $(f,s)$ is always the cokernel of its kernel $k$. This means, in particular, that all split extensions over a protomodular object $Y$, as well as all of their pullbacks, are (split) short exact sequences which admit a join decomposition.
\end{remark}

\begin{remark}
When all objects in $\C$ are protomodular, $\C$ is a \defn{protomodular category} in the sense of~\cite{Bourn1991}. Next to Barr exactness, this is one of the key ingredients in the definition of a semi-abelian category~\cite{Janelidze-Marki-Tholen}, and crucial for results such as the \emph{$3\times 3$ Lemma}, the \emph{Snake Lemma}, the \emph{Short Five Lemma}~\cite{Bourn2001,Borceux-Bourn}, or the existence of a Quillen model category structure for homotopy of simplicial objects~\cite{VdLinden:Simp}. Typical examples are the categories of groups, Lie algebras, crossed modules, loops, associative algebras, etc. As recently shown in~\cite{GKV,Kadjo-PhD}, also the category of cocommutative Hopf algebras over a field of characteristic zero is semi-abelian.
\end{remark}

Given a category with finite products $\C$, we write $\Mon(\C)$ for the category of internal monoids, and $\Gp(\C)$ for the category of internal groups in $\C$. For a commutative ring with unit $\K$, we let $\CoAlg_{\K,\coc}$ denote the category of cocommutative coalgebras over $\K$. It is well known~\cite{Sweedler} that there is an equivalence between the category $\BiAlgcoc$ of cocommutative bialgebras over $\K$ and $\Mon(\CoAlg_{\K,\coc})$, which restricts to an equivalence between the category $\Hopf_{\K,\coc}$ of cocommutative Hopf algebras over $\K$ and $\Gp(\CoAlg_{\K,\coc})$. This is easily seen using that in $\CoAlg_{\K,\coc}$ the product $X\times Y$ is $X\tensor Y$ and $1$ is $\K$. 

\begin{theorem}\label{Theorem 1}
Let $\C$ be a category with finite limits. If $Y \in \Gp(\C)$ then all split extensions in $\Mon(\C)$ over $Y$ are stably strong. In other words, any internal group in $\C$ is a protomodular object in $\Mon(\C)$.
\end{theorem}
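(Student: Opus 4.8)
The plan is to reduce the statement to the existence of a single concrete split epimorphism in the underlying category $\C$, built from the group structure on $Y$ through the ``Schreier'' formula $x\mapsto k(a)\cdot s(y)$ familiar from the case of monoids in $\Set$.

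First I would fix a split extension $(k,f,s)$ in $\Mon(\C)$ over $Y$ and a morphism $g\colon W\to Y$, and form the pullback $P=X\times_Y W$, with projections $p_X\colon P\to X$ and $p_W\colon P\to W$. A routine check shows that the resulting split epimorphism $p_W$ has section $\sigma=\langle s\comp g,\,1_W\rangle\colon W\to P$ and kernel $\kappa=\langle k,\,e_W\comp{!}\rangle\colon K\to P$, so we obtain a split extension over $W$ with ``the same'' kernel. I would then use two properties of the forgetful functor $U\colon\Mon(\C)\to\C$: it creates finite limits, so a morphism of $\Mon(\C)$ is a monomorphism exactly when its underlying morphism in $\C$ is; and it reflects isomorphisms, so a morphism of $\Mon(\C)$ whose underlying morphism in $\C$ is both a split epimorphism and a monomorphism is an isomorphism. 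Hence, to see that $(\kappa,\sigma)$ is jointly extremally epimorphic it suffices to prove that every monomorphism $m\colon M\to P$ in $\C$ through which both $\kappa$ and $\sigma$ factor is a split epimorphism.

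The heart of the argument is the construction of one split epimorphism onto $P$. Writing $i_Y$ for the inversion of $Y$ and $m_X$, $m_Y$, $m_M$, $m_P$ for the relevant multiplications, I would first form $\Psi:=m_X\comp\langle p_X,\, s\comp i_Y\comp g\comp p_W\rangle\colon P\to X$, and use the pullback identity $f\comp p_X=g\comp p_W$ together with the group identity $m_Y\comp\langle 1_Y,i_Y\rangle=e_Y\comp{!}$ to check that $f\comp\Psi$ is the zero morphism; since $k$ is the kernel of $f$, this yields a unique $\overline\Psi\colon P\to K$ with $k\comp\overline\Psi=\Psi$. A short diagram chase using associativity, unitality and the other group identity $m_Y\comp\langle i_Y,1_Y\rangle=e_Y\comp{!}$ then gives
\[
m_P\comp\langle\kappa\comp\overline\Psi,\; \sigma\comp p_W\rangle=1_P,
\]
that is, the morphism $\mu:=m_P\comp(\kappa\times\sigma)\colon K\times W\to P$ admits the section $\langle\overline\Psi,\,p_W\rangle$. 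Finally, if $m\colon M\to P$ is a monomorphism in $\C$ with $\kappa=m\comp\kappa'$ and $\sigma=m\comp\sigma'$, then, since $m$ is a homomorphism of monoids,
\[
\mu=m_P\comp(\kappa\times\sigma)=m_P\comp(m\times m)\comp(\kappa'\times\sigma')=m\comp m_M\comp(\kappa'\times\sigma')
\]
factors through $m$; as $\mu$ is a split epimorphism, so is $m$, hence $m$ is an isomorphism, and therefore an isomorphism in $\Mon(\C)$. Running this for every $g\colon W\to Y$ shows that all split extensions over $Y$ are stably strong, i.e.\ $Y$ is a protomodular object of $\Mon(\C)$.

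I expect the main obstacle to be care rather than depth. The crucial point is that neither the section $\langle\overline\Psi,\,p_W\rangle$ of $\mu$ nor the resulting retraction of $m$ is a morphism of $\Mon(\C)$ --- the multiplication $m_M$ is not a monoid homomorphism unless $M$ is commutative --- so the whole argument genuinely takes place in $\C$, and the return to $\Mon(\C)$ has to be funnelled through the fact that $U$ reflects isomorphisms. One must also be disciplined about invoking only the actual group axioms of $Y$ and the finite limits of $\C$, with no element-chasing beyond that, and double-check the elementary claims about the kernel and section of the pulled-back point; the rest is a routine unravelling of the monoid and group identities.
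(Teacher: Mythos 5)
Your proof is correct, and it rests on exactly the same key idea as the paper's: the Schreier-type decomposition $(w,x)=(1,\,x\cdot s(g(w))^{-1})\cdot(w,\,s(g(w)))$ of an element of the pullback into a kernel element times a section element, which is precisely the formula appearing in the paper's proof. The only difference is in how that identity is certified: the paper observes that the whole diagram involves nothing but finite limits, pushes it through the Yoneda embedding into presheaves of sets, and then checks the formula elementwise in $\Set$, letting the reflection of isomorphisms carry the conclusion back; you instead verify the same identity directly in $\C$ by exhibiting the splitting $\langle\overline\Psi,p_W\rangle$ of $m_P\comp(\kappa\times\sigma)$ and then use that the forgetful functor to $\C$ preserves monomorphisms and reflects isomorphisms. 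Your version is longer but self-contained and makes explicit where each group axiom of $Y$ is used; the paper's version outsources all the diagram chasing to $\Set$ at the (small) cost of justifying the transfer. Both arguments correctly locate the one genuinely non-homomorphic ingredient --- the retraction built from $m_M$ lives only in $\C$ --- and handle the return to $\Mon(\C)$ properly.
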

\begin{proof}
Consider in $\Mon(\C)$ the commutative diagram
	\[
	\xymatrix@C=4em{
		& \Ker (\pi_1) \ar[d]_-{l} \ar@/_1.2pc/[dl] \ar@{=}[r] & \Ker(f) \ar[d]^{k} \\
		M \ar@{{ >}->}[r]^-m & W \times_{Y} X \ophalfsplitpullback \ar@<.5ex>[d]^(.6){\pi_1} \ar[r]^-{\pi_2} & X \ar@<.5ex>[d]^-f \\
		& W \ar@<.5ex>[u]^-{\langle 1_{W}, s\circ g\rangle} \ar[r]_-g \ar@/^1.2pc/[lu] & Y \ar@<.5ex>[u]^-s
	}
	\]
where the bottom right square is a pullback, $m$ is a monomorphism, and $Y$ is an internal group. We shall see that $m$ is an isomorphism. Since only limits are considered, the whole commutative diagram is sent into a category of presheaves of sets by the Yoneda embedding, in such a way that the internal groups and internal monoids in it are mapped to ordinary groups and monoids, respectively. Since the Yoneda embedding reflects isomorphisms, it now suffices to give a proof in~$\Set$. There, it is easy to see that $m$ is an isomorphism, since every element $(w, x)$ of~$W \times_{Y} X$ can be written as $(1, x\cdot s(g(w)^{-1})) \cdot (w, sg(w))$, where clearly the first element belongs to the kernel of $\pi_1$ and the second one comes from $W$.
\end{proof}

\begin{corollary}\label{Corollary Join Decomposition}
Cocommutative Hopf algebras are protomodular in $\BiAlgcoc$.\hfill\qed
\end{corollary}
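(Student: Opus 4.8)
The plan is to obtain the statement from Theorem~\ref{Theorem 1} by transporting it along the equivalences of categories recalled above.

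First I would invoke Theorem~\ref{Theorem 1} with $\C=\CoAlg_{\K,\coc}$, the category of cocommutative coalgebras over $\K$. This category has finite limits: its terminal object is $\K$ and its binary products are given by the tensor product, while equalizers of coalgebra morphisms exist by a standard argument (take the join of all subcoalgebras on which the two morphisms agree). Since the equivalence $\BiAlgcoc\simeq\Mon(\C)$ restricts to an equivalence $\Hopf_{\K,\coc}\simeq\Gp(\C)$, a cocommutative Hopf algebra, viewed as an object of $\BiAlgcoc$, corresponds to an internal group in $\C$; by Theorem~\ref{Theorem 1}, this internal group is a protomodular object of $\Mon(\C)$.

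It then remains to note that protomodularity of an object is preserved by equivalences between pointed categories with finite limits. Indeed, every ingredient occurring in the definition---the zero object, kernels, split epimorphisms together with a chosen section, pullbacks along arbitrary morphisms, monomorphisms, and isomorphisms---is a notion formulated purely in terms of finite limits, hence preserved and reflected by any equivalence. Therefore the equivalence $\Mon(\C)\simeq\BiAlgcoc$ sends the protomodular object of $\Mon(\C)$ corresponding to a given cocommutative Hopf algebra to that Hopf algebra itself, now seen as a protomodular object of $\BiAlgcoc$.

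I do not expect a genuine obstacle here: the argument is a routine transfer along an equivalence. The only point that deserves a moment's care is verifying that $\CoAlg_{\K,\coc}$ really does possess the finite limits required to apply Theorem~\ref{Theorem 1}, which, as indicated above, follows from the explicit description of its terminal object and products together with the standard construction of equalizers in categories of coalgebras.
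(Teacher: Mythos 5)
Your argument is correct and is exactly the route the paper intends: the corollary is stated with an immediate \qed precisely because it follows from Theorem~\ref{Theorem 1} applied to $\C=\CoAlg_{\K,\coc}$ together with the equivalences $\BiAlgcoc\simeq\Mon(\CoAlg_{\K,\coc})$ and $\Hopf_{\K,\coc}\simeq\Gp(\CoAlg_{\K,\coc})$ recalled just before the theorem. Your extra care in checking that $\CoAlg_{\K,\coc}$ has finite limits and that protomodularity of an object transfers along equivalences is sound, if more detail than the paper bothers to record.
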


It follows that, over a Hopf algebra, split extensions of bialgebras are well-behaved; not only are they short exact sequences, but it is also not hard to see that the \emph{Split Short Five Lemma} holds for them, so that equivalences classes of split extensions may be considered as in ordinary group cohomology.

\section{A universal characterization of cocommutative Hopf algebras}\label{Characterization}

The converse is less straightforward. In the case of groups and monoids ($\C=\Set$ in Theorem~\ref{Theorem 1}), it was shown in \cite{MRVdL-TCOGAM} (resp.\ in \cite{GM-ACS}) that all points in $\Mon$ over~$Y$ are stably strong (resp.\ strong) if and only if $Y$ is a group.
However, those proofs involve coproducts, and so a Yoneda embedding argument as in Theorem~\ref{Theorem 1} would not work. 

We now let $\K$ be an algebraically closed field.
We consider the adjoint pair 
\[
\xymatrix{\BiAlgcoc \ar@<1ex>[r]^-{G} \ar@{}[r]|-{\top} & \Mon \ar@<1ex>[l]^-{\K[-]}}
\]
where the left adjoint $\K[-]$ is the monoid algebra functor and the right adjoint $G$ sends a bialgebra $B$ (with comultiplication $\Delta_{B}$ and counit $\varepsilon_{B}$) to its monoid of grouplike elements $G(B)=\{x\in B\mid \text{$\Delta_{B}(x)=x\tensor x$ and $\varepsilon_{B}(x)=1$}\}$. 

\begin{lemma}\label{Lemma K[-] Preserves Monos}
$\K[-]$ preserves monomorphisms.
\end{lemma}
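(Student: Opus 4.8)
The plan is to show that the monoid algebra functor $\K[-]\colon \Mon\to\BiAlgcoc$ takes an injective monoid homomorphism $i\colon M\to N$ to a monomorphism of bialgebras. Since the forgetful functor from $\BiAlgcoc$ to the category of $\K$-vector spaces creates limits (in particular monomorphisms are just injective linear maps), it suffices to check that the induced linear map $\K[i]\colon \K[M]\to\K[N]$ is injective. Here $\K[M]$ is the free vector space on the underlying set of $M$, with basis the elements of $M$, and $\K[i]$ is the linearization of the map of sets underlying $i$.

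The key point is simply that the free vector space functor $\Set\to\mathsf{Vect}_{\K}$ sends injections to injections: if $i\colon M\to N$ is injective, then it maps the basis $\{m\}_{m\in M}$ of $\K[M]$ injectively into the basis $\{n\}_{n\in N}$ of $\K[N]$, and a linear map sending a basis injectively into a linearly independent set is itself injective. Concretely, if $\sum_{m} \lambda_m\, \K[i](m) = \sum_m \lambda_m\, i(m) = 0$ in $\K[N]$, then since the $i(m)$ are distinct elements of the basis of $\K[N]$ all the $\lambda_m$ vanish. Hence $\K[i]$ is injective, so it is a monomorphism of vector spaces, and therefore a monomorphism in $\BiAlgcoc$.

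I expect there to be essentially no obstacle here: the statement is a soft, formal fact. The only thing worth being slightly careful about is that a monomorphism in $\BiAlgcoc$ is the same thing as an injective $\K$-linear map; this follows because $\BiAlgcoc$ is equivalent to $\Mon(\CoAlg_{\K,\coc})$ and the relevant forgetful functors down to $\K$-modules preserve and reflect the property of being a monomorphism, the underlying map of a bialgebra monomorphism being injective. Alternatively one can avoid even this by noting that $\K[-]$ being a left adjoint need not preserve monos, so the content is genuinely the elementary linear-algebra observation above, applied after forgetting all the extra structure.
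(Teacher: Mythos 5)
Your argument is, in substance, the paper's own proof, which is a one-liner: $\K[-]$ sends a monoid monomorphism (i.e.\ an injective homomorphism) to a bialgebra morphism whose underlying linear map is injective, and is therefore a monomorphism. The linear-algebra core of your write-up --- the free vector space functor sends injections to injections because it maps a basis injectively into a basis --- is exactly right.

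One of your justifications is incorrect, though it does not damage the proof. The forgetful functor from $\BiAlgcoc$ to $\K$-vector spaces does not create (or even preserve) limits: the product in $\BiAlgcoc$ is the \emph{tensor product} of the underlying vector spaces, not their direct product, which is precisely why kernels and limits of bialgebras are subtle (as the paper notes in the introduction). For the same reason you should not assert that monomorphisms in $\BiAlgcoc$ are exactly the injective maps; the paper itself recalls that monomorphisms of bialgebras need not be injective in general, and only the easy direction is needed here. That easy direction is all you use: the forgetful functor to vector spaces is faithful, and faithful functors reflect monomorphisms, so the injectivity of the underlying linear map of $\K[i]$ already forces $\K[i]$ to be a monomorphism in $\BiAlgcoc$. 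With that substitution your proof is complete and coincides with the paper's.
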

\begin{proof}
The functor $\K[-]$ sends any monoid monomorphism to a bialgebra morphism of which the underlying vector space map is an injection. 
\end{proof}

Our aim is to prove that $G$ preserves protomodular objects: then for any protomodular bialgebra $B$, the monoid of grouplike elements $G(B)$ is a group, so that $B$ is a Hopf algebra by~\cite[8.0.1.c and 9.2.5]{Sweedler}.

\begin{proposition}\label{Proposition Unit Split Monic}
For any monoid $M$ we have $G(\K[M])\cong M$. For any bialgebra~$B$, the counit $\epsilon_{B}\colon \K[G(B)]\to B$ of the adjunction at $B$ is a split monomorphism with retraction $\pi_{B}\colon B\to \K[G(B)]$, determined in a way which is functorial in $B$.
\end{proposition}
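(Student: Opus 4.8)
The plan is to split the statement into two parts. For the first part, $G(\K[M]) \cong M$, I would unwind the definitions: the monoid algebra $\K[M]$ has the elements of $M$ as a basis, comultiplication determined by $\Delta(m) = m \tensor m$ and counit $\varepsilon(m) = 1$ for $m \in M$. So every basis element of $\K[M]$ coming from $M$ is grouplike, giving an inclusion $M \hookrightarrow G(\K[M])$. For the reverse inclusion I would invoke the standard fact (for instance from~\cite{Sweedler}) that grouplike elements of a coalgebra are linearly independent; since an arbitrary grouplike element of $\K[M]$ is a linear combination of the basis elements $m$, linear independence of the grouplikes forces it to be one of the $m$. This identification should be checked to be the unit $\eta_M$ of the adjunction, which is routine.

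For the second part, I would consider the counit $\epsilon_B \colon \K[G(B)] \to B$, which on the basis element corresponding to $x \in G(B)$ is simply the inclusion $x \mapsto x$ into $B$. To produce the retraction $\pi_B$, the idea is to use that $\K$ is algebraically closed: the coalgebra structure on $B$, together with the classical structure theory, gives a decomposition in which the grouplike elements play a distinguished role. Concretely, I expect the correct statement to be that the subcoalgebra (indeed subbialgebra) generated by the grouplikes, namely the image of $\epsilon_B$, admits a natural linear complement inside $B$ that is itself a coideal, so that projection onto $\K[G(B)]$ along this complement is a coalgebra map; one then checks it is also an algebra map and hence a bialgebra morphism, and that it splits $\epsilon_B$. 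The functoriality of $\pi_B$ in $B$ should follow from the naturality/canonicity of whatever decomposition is used.

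The main obstacle, I expect, is constructing $\pi_B$ canonically as a bialgebra morphism rather than merely a linear map. Producing a linear retraction is comparatively easy — extend the linearly independent family of grouplikes to a basis of $B$ and project — but such an ad hoc retraction is neither functorial nor multiplicative/comultiplicative. The real work is to identify the complement of $\K[G(B)]$ in $B$ intrinsically, which is where algebraic closedness of $\K$ must enter (over a non-closed field, even simple coalgebras need not be spanned by grouplikes, so the statement would fail). I anticipate the argument uses that over an algebraically closed field the coradical of a cocommutative coalgebra is spanned by its grouplike elements, letting one peel off a canonical "grouplike part"; the complement is then built from the higher pieces of the coradical filtration, and one verifies that the resulting projection respects both the algebra and coalgebra structures and is natural in $B$.
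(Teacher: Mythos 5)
Your first claim and its proof are fine: the grouplikes of $\K[M]$ are precisely the basis elements coming from $M$ (linear independence of distinct grouplikes, or a direct computation with coefficients, settles the reverse inclusion), and this identification is the unit of the adjunction. The paper treats this part as immediate from the definition of $\K[M]$.

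For the second claim, however, your proposal stops exactly where the content of the proposition lies, and the mechanism you guess at is not the one that works. The paper's proof is a citation of \cite[8.0.1.c and 8.1.2]{Sweedler}, and the construction behind those references is the following: since $\K$ is algebraically closed and $B$ is cocommutative, $B$ decomposes as a coalgebra into the \emph{direct sum} of its irreducible components, $B=\bigoplus_{g\in G(B)}B_{g}$, each component being pointed with unique grouplike $g$; the retraction is then $\pi_{B}(x)=\varepsilon_{B}(x)\,g$ for $x\in B_{g}$. Two points in your sketch need repair. First, the coradical filtration is only a filtration, so ``the higher pieces of the coradical filtration'' do not by themselves furnish a complement of $\K[G(B)]$; the canonical complement is $\bigoplus_{g}(B_{g}\cap\Ker\varepsilon_{B})$, and it is the direct-sum decomposition into irreducible components (not the filtration) that makes this available. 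Second, the assertion that $\pi_{B}$ ``is also an algebra map'' is not a routine check: it requires the multiplicative compatibility $B_{g}B_{h}\subseteq B_{gh}$ of the components with the monoid structure on $G(B)$, which is precisely what \cite[8.1.2]{Sweedler} supplies. Once these two facts are in place, comultiplicativity, the retraction identity $\pi_{B}\circ\epsilon_{B}=1_{\K[G(B)]}$, and functoriality (a bialgebra map sends the irreducible component of $g$ into the irreducible component of the image of $g$) all follow easily. You did correctly locate where algebraic closedness and cocommutativity enter, but without the direct-sum decomposition and the product rule for components the retraction is not actually constructed.
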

\begin{proof}
The first statement follows immediately from the definition of $\K[M]$, while the second depends on \cite[8.0.1.c and 8.1.2]{Sweedler}.
\end{proof}

Since protomodular objects are closed under retracts~\cite{MRVdL-TCOGAM}, it follows that if~$B$ is a protomodular bialgebra, then so is $\K[G(B)]$.

\begin{proposition}\label{Proposition G Preserves JSE Pairs}
The functor $G$ preserves jointly extremally epimorphic pairs.
\end{proposition}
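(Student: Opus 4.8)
The plan is to reduce everything to the structure theory of cocommutative coalgebras over the algebraically closed field $\K$, and to invoke the joint-extremal-epimorphy hypothesis exactly once. Recall that every cocommutative coalgebra $C$ splits as a direct sum $C=\bigoplus_{g\in G(C)}C^{(g)}$ of its irreducible components, where $C^{(g)}$ is the largest irreducible subcoalgebra containing the grouplike $g$; over an algebraically closed field all simple subcoalgebras are one-dimensional, so this decomposition is genuinely indexed by $G(C)$, and each summand $C^{(g)}$ is non-zero because $g\in C^{(g)}$. I would first record two properties of this decomposition. It is functorial: any morphism $f\colon C\to D$ of cocommutative coalgebras satisfies $f(C^{(g)})\subseteq D^{(f(g))}$, since $f$ respects the wedge powers $(\K g)^{\wedge n}$ whose union is $C^{(g)}$. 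And on a bialgebra it is multiplicative: $C^{(g)}\tensor C^{(h)}$ is the irreducible component of $C\tensor C$ at the grouplike $g\tensor h$, and the multiplication $C\tensor C\to C$ is a coalgebra morphism carrying $g\tensor h$ to $gh$, so $C^{(g)}\cdot C^{(h)}\subseteq C^{(gh)}$; together with $1\in C^{(1)}$ this shows that for any submonoid $N$ of $G(C)$ the subspace $C_N:=\bigoplus_{g\in N}C^{(g)}$ is a sub-bialgebra of $C$, cocommutative since $C$ is.

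Then, given a jointly extremally epimorphic pair $(k\colon K\to X,\ s\colon Y\to X)$ in $\BiAlgcoc$, I would suppose that $G(k)$ and $G(s)$ both factor through a monomorphism $m\colon N\to G(X)$ in $\Mon$ and show that $m$ is an isomorphism. A monomorphism of monoids is injective, so $N$ may be viewed as a submonoid of $G(X)$, and the hypothesis then says precisely that $k$ and $s$ send grouplike elements into $N$. By functoriality of the component decomposition, $k(K^{(g)})\subseteq X^{(k(g))}\subseteq X_N$ for every $g\in G(K)$, and since $K=\bigoplus_{g\in G(K)}K^{(g)}$ this yields $k(K)\subseteq X_N$; likewise $s(Y)\subseteq X_N$. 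Hence $k$ and $s$ both factor through the inclusion $X_N\to X$, which is a monomorphism of $\BiAlgcoc$ because it is injective on underlying vector spaces. Since $(k,s)$ is jointly extremally epimorphic, $X_N\to X$ is an isomorphism, i.e.\ $X_N=X$; but $X=\bigoplus_{g\in G(X)}X^{(g)}$ with every $X^{(g)}\neq 0$, so this forces $N=G(X)$, and therefore $m$ is an isomorphism.

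The routine points --- that injective coalgebra and bialgebra maps are monomorphisms, that $C_N$ is closed under the bialgebra operations, and that $G$ sends a bialgebra morphism to its restriction to the grouplike elements --- I would dispatch in a line each. The one genuinely non-formal ingredient, and the step I expect to be the real work, is the structure theory quoted above: the decomposition of a cocommutative coalgebra into irreducible components, its naturality, and its compatibility with tensor products. The last of these is exactly where algebraic closedness of $\K$ is used, through the fact that the coradical of a tensor product is the tensor product of the coradicals; for all of this I would cite Sweedler or Montgomery. With that input in hand, the argument above is the whole proof, and the categorical hypothesis enters only in the penultimate sentence.
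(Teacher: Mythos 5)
Your proof is correct, but it takes a genuinely different route from the paper's. The paper never touches the irreducible-component decomposition inside this proof: it packages the relevant structure theory into Proposition~\ref{Proposition Unit Split Monic}, namely that the counit $\epsilon_{B}\colon \K[G(B)]\to B$ of the adjunction $\K[-]\dashv G$ is a split monomorphism with a retraction $\pi_{B}$ that is functorial in $B$ (this is where Sweedler's 8.0.1.c and 8.1.2, hence algebraic closedness, enter). Given a jointly extremally epimorphic pair $(k,s)$, the functorial retractions first transfer that property to $(\K[G(k)],\K[G(s)])$; then a factorization of $(G(k),G(s))$ through a monomorphism $m$ is hit with $\K[-]$, which preserves monomorphisms by Lemma~\ref{Lemma K[-] Preserves Monos}, forcing $\K[m]$ and hence $m=G(\K[m])$ to be an isomorphism. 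You instead use the decomposition $X=\bigoplus_{g\in G(X)}X^{(g)}$ explicitly to manufacture, from a submonoid $N$ of $G(X)$, the sub-bialgebra $X_{N}$ through which $k$ and $s$ must factor, and apply the hypothesis exactly once, directly in $\BiAlgcoc$. The two arguments rest on the same input from Sweedler---your route essentially re-proves the existence of $\pi_{B}$ along the way, since $\pi_{B}$ is precisely the map sending $b\in B^{(g)}$ to $\varepsilon_{B}(b)g$---but yours is more self-contained and makes the role of algebraic closedness visible, whereas the paper's adjunction formulation keeps this proof purely formal and is reused immediately in Proposition~\ref{Proposition G Preserves Protomodular Objects}. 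The structure-theoretic facts you defer to the literature (the direct-sum decomposition indexed by grouplikes, its naturality, the irreducibility of $C^{(g)}\tensor C^{(h)}$ via the coradical of a tensor product, and the behaviour of coradicals under quotients) are all standard and correctly invoked, so there is no gap.
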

\begin{proof}
Let $(k,s)$ be a jointly extremally epimorphic pair in $\BiAlgcoc$. Then the commutativity of the diagram
\[
\xymatrix{\K[G(K)] \ar[r]^-{\K[G(k)]} & \K[G(X)] & \K[G(Y)] \ar[l]_-{\K[G(s)]}\\
K \ar[r]_-{k} \ar[u]^-{\pi_{K}} & X \ar[u]^-{\pi_{X}} & Y \ar[u]_-{\pi_{Y}} \ar[l]^-{s}}
\]
obtained via Proposition~\ref{Proposition Unit Split Monic} and the fact that the upward pointing arrows are split epimorphisms imply that the pair $(\K[G(k)],\K[G(s)])$ is jointly extremally epimorphic. Now suppose that $m$ is a monomorphism making the diagram on the left
\[
\vcenter{\xymatrix{&M \ar@{{ >}->}[d]^-{m}\\
G(K) \ar[ru] \ar[r]_-{G(k)} & G(X) & G(Y) \ar[lu] \ar[l]^-{G(s)}}}
\qquad
\vcenter{\xymatrix{&\K[M] \ar@{{ >}->}[d]^-{\K[m]}\\
\K[G(K)] \ar[ru] \ar[r]_-{\K[G(k)]} & \K[G(X)] & \K[G(Y)] \ar[lu] \ar[l]^-{\K[G(s)]}}}
\]
commute. Applying $\K[-]$ we obtain the diagram on the right, in which $\K[m]$ is a monomorphism by Lemma~\ref{Lemma K[-] Preserves Monos}. Since, by the above, the bottom pair is jointly extremally epimorphic, we see that $\K[m]$ is an isomorphism. But then also $m=G(\K[m])$ is an isomorphism, which proves our claim that $(G(k),G(s))$ is a jointly extremally epimorphic pair. 
\end{proof}

\begin{proposition}\label{Proposition G Preserves Protomodular Objects}
If all split extensions over a bialgebra $Y$ are strong, then all split extensions over $G(Y)$ are strong. In particular, $G$ preserves protomodular objects. 
\end{proposition}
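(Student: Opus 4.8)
The plan is to transport the statement across the adjunction $\K[-]\dashv G$. Suppose all split extensions over the bialgebra $Y$ are strong, and let $(f\colon X\to G(Y),s)$ be a point in $\Mon$ with kernel $k\colon K\to X$. The first step is to observe that $\K[-]$ preserves finite limits: it takes finite products of monoids to tensor products of monoid algebras, which are the products in $\BiAlgcoc$, and it preserves equalizers because every sub-bialgebra of a monoid algebra $\K[M]$ has the form $\K[N]$ for a submonoid $N$ of $M$ --- here one uses that sub-coalgebras of pointed coalgebras are pointed. Consequently $\K[-]$ sends our split extension to a split extension $(\K[f]\colon \K[X]\to\K[G(Y)],\K[s])$ with kernel $\K[k]$ over $\K[G(Y)]$ in $\BiAlgcoc$, and it takes the pullback of the former along any $g\colon W\to G(Y)$ to the pullback of the latter along $\K[g]$.

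Next I would exploit that $\K[G(Y)]$ is a retract of $Y$. By Proposition~\ref{Proposition Unit Split Monic} the counit $\epsilon_Y\colon\K[G(Y)]\to Y$ is a monomorphism with retraction $\pi_Y$. Since the class of objects all of whose split extensions are strong is closed under retracts --- this follows as in~\cite{MRVdL-TCOGAM}, where the same is shown for protomodular objects --- all split extensions over $\K[G(Y)]$ are strong, and $\K[G(Y)]$ is protomodular as soon as $Y$ is. In particular the split extension $(\K[f],\K[s])$, as well as all of its pullbacks when $Y$ is protomodular, are strong.

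Finally I would push the conclusion back to $\Mon$. The unit $\mathrm{id}_{\Mon}\to G\K[-]$ is an isomorphism by Proposition~\ref{Proposition Unit Split Monic}, so $G$ carries $(\K[f],\K[s])$ back to a copy of $(f,s)$, and by Proposition~\ref{Proposition G Preserves JSE Pairs} it preserves jointly extremally epimorphic pairs; hence $(f,s)$ is strong. For the final assertion, applying the same reasoning to the pullback of $(f,s)$ along an arbitrary $g\colon W\to G(Y)$ --- which by the first paragraph is sent by $\K[-]$ to the corresponding pullback of $(\K[f],\K[s])$, and is therefore strong --- shows that $G(Y)$ is protomodular whenever $Y$ is.

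I expect the main obstacle to be the retract-closure statement used in the second paragraph. For protomodular objects it is available from~\cite{MRVdL-TCOGAM}, but the hypothesis here is the a priori weaker one that all split extensions over $Y$ are strong, so I would need the analogous retract lemma for that property (or a reduction of it to the protomodular case). A secondary point requiring care is the preservation of equalizers by $\K[-]$ in the first paragraph: although routine, it is exactly where the pointedness of monoid algebras is indispensable.
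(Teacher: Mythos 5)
Your proposal is correct in outline and shares the paper's overall strategy (transport along $\K[-]\dashv G$, use the retract $\K[G(Y)]$ of $Y$, come back via $G$ and Proposition~\ref{Proposition G Preserves JSE Pairs}), but it differs on the key technical step. You put the burden on $\K[-]$ preserving finite limits --- kernels and pullbacks --- which is a genuinely stronger claim than anything the paper needs. It is in fact true, but your one-line justification is too thin: ``subcoalgebras of pointed coalgebras are pointed'' does not by itself give that every sub-bialgebra of $\K[M]$ is of the form $\K[N]$; you need that $\K[M]$ is cosemisimple and spanned by its grouplikes, so that \emph{every} subcoalgebra is the span of the grouplikes it contains. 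The paper sidesteps all of this: it applies $\K[-]$, then takes the kernel $L$ of $\K[f]$ in $\BiAlgcoc$ \emph{afresh} (making no claim that $L=\K[\Ker f]$), and recovers the original kernel only at the end, using that the right adjoint $G$ preserves kernels together with $G\K[-]\cong 1$. Similarly, for the ``in particular'' clause the paper does not chase pullbacks through $\K[-]$ at all: once all split extensions over the monoid $G(Y)$ are strong, $G(Y)$ is a group by~\cite{GM-ACS} and hence protomodular, so your pullback-preservation argument is unnecessary. Finally, the retract-closure issue you flag at the end is real but not fatal (and the paper glosses over it in exactly the same way, citing only the protomodular version from~\cite{MRVdL-TCOGAM}): given a point over a retract $Z$ of $Y$ and a subobject $m$ through which its kernel and section factor, the induced subobject $1_Y\times m$ of the point pulled back along the retraction receives both the kernel and the section of that point over $Y$, hence is an isomorphism, and pulling back along the splitting shows $m$ itself is one; so ``all points strong'' does pass to retracts. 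In short: your argument can be completed, but at the cost of a nontrivial limit-preservation lemma for $\K[-]$ that the paper's version of the proof is specifically designed to avoid.
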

\begin{proof}
Consider a split extension
\[
\xymatrix{K \ar[r]^{k} & X \ar@<.5ex>[r]^-{f} & G(Y) \ar@<.5ex>[l]^-{s}}
\]
over $G(Y)$. We apply the functor $\K[-]$, then take the kernel of $\K[f]$ to obtain the split extension of bialgebras
\[
\xymatrix{L \ar[r]^-{l} & \K[X] \ar@<.5ex>[r]^-{\K[f]} & \K[G(Y)]. \ar@<.5ex>[l]^-{\K[s]}}
\]
From Proposition~\ref{Proposition Unit Split Monic} it follows that all split extensions over $\K[G(Y)]$ are strong. Hence $(l,\K[s])$ is a jointly extremally epimorphic pair. Applying the functor $G$, we regain the original split extension, since $G$ is a right adjoint, thus preserves kernels; but $G$ also preserves jointly extremally epimorphic pairs by Proposition~\ref{Proposition G Preserves JSE Pairs}, so that the pair $(k,s)$ is jointly extremally epimorphic. As a consequence, all split extensions over the monoid $G(Y)$ are strong, and $G(Y)$ is protomodular~\cite{GM-ACS}. 
\end{proof}

\begin{theorem}\label{Theorem 2}
If $\K$ is an algebraically closed field and $Y$ is a cocommutative bialgebra over $\K$, then the following conditions are equivalent:
\begin{tfae}
\item $Y$ is a Hopf algebra;
\item in $\BiAlgcoc$, all split extensions over $Y$ admit a join decomposition;
\item $Y$ is a protomodular object in $\BiAlgcoc$.
\end{tfae} 
\end{theorem}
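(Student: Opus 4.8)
The plan is to prove the cycle of implications $(\text{i})\Rightarrow(\text{iii})\Rightarrow(\text{ii})\Rightarrow(\text{i})$, drawing on the machinery already assembled in this section. The first implication is immediate from Corollary~\ref{Corollary Join Decomposition}: if $Y$ is a cocommutative Hopf algebra, then it is a protomodular object in $\BiAlgcoc$. The implication $(\text{iii})\Rightarrow(\text{ii})$ is purely definitional---being a protomodular object means every split extension over $Y$ is stably strong, hence in particular strong, which is exactly what it means to admit a join decomposition.

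The substance is therefore in the implication $(\text{ii})\Rightarrow(\text{i})$. Here I would argue as follows. Assume all split extensions over $Y$ in $\BiAlgcoc$ are strong. By Proposition~\ref{Proposition G Preserves Protomodular Objects} (more precisely, by the first sentence of its statement), all split extensions over the monoid $G(Y)$ are then strong. Invoking the characterization of groups among monoids from \cite{GM-ACS}, this forces $G(Y)$ to be a group. Finally, since $Y$ is cocommutative and the monoid of grouplike elements $G(Y)$ is a group, the cited results \cite[8.0.1.c and 9.2.5]{Sweedler} guarantee that $Y$ admits an antipode, i.e.\ $Y$ is a Hopf algebra. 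This is precisely the argument foreshadowed in the paragraph preceding Lemma~\ref{Lemma K[-] Preserves Monos}, now made into a clean implication.

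The main obstacle---though it has essentially been dispatched by the preparatory results---is the passage from a join-decomposition hypothesis on split extensions of bialgebras to one on split extensions of monoids, because the naive approach via a Yoneda embedding fails here (the relevant constructions in \cite{MRVdL-TCOGAM,GM-ACS} involve coproducts, which Yoneda does not preserve). This is exactly why the proof must route through the adjunction $\K[-]\dashv G$: Proposition~\ref{Proposition Unit Split Monic} supplies the functorial splitting $\epsilon_B$ of the counit so that split extensions over $\K[G(Y)]$ inherit strength from those over $Y$, while Proposition~\ref{Proposition G Preserves JSE Pairs} ensures that $G$ transports joint extremal epimorphicity back down to $\Mon$. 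Assembling these gives $(\text{ii})\Rightarrow(\text{i})$ with no further work, so the proof of Theorem~\ref{Theorem 2} itself reduces to citing the three propositions and Sweedler's book in the right order.
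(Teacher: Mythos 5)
Your proof is correct and follows exactly the paper's own argument: (i)$\Rightarrow$(iii) via Theorem~\ref{Theorem 1} (Corollary~\ref{Corollary Join Decomposition}), (iii)$\Rightarrow$(ii) by definition, and (ii)$\Rightarrow$(i) by combining Proposition~\ref{Proposition G Preserves Protomodular Objects} with the characterization of groups among monoids from \cite{GM-ACS} and Sweedler's results. No substantive differences.
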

\begin{proof}
(i) implies (iii) is Theorem~\ref{Theorem 1}, and (ii) is obviously weaker than (iii). For the proof that (ii) implies (i), suppose that all split extensions over $Y$ admit a join decomposition. Then Proposition~\ref{Proposition G Preserves Protomodular Objects} implies that in $\Mon$ all split extensions over $G(Y)$ are strong. Hence $G(Y)$ is a group by the result in~\cite{GM-ACS}, which makes $Y$ a Hopf algebra by~\cite[8.0.1.c and 9.2.5]{Sweedler}. 
\end{proof}

\begin{remark}
This implies that the category $\BiAlgcoc$ cannot be protomodular: otherwise all bialgebras would be Hopf algebras. In particular, the \emph{Split Short Five Lemma} is not generally valid for bialgebras.
\end{remark}

\section{On cocommutativity}\label{Cocommutativity}
In this final section we study what happens beyond the cocommutative setting. Here $\K$ is a field.

All objects in the category of cocommutative $\K$-bialgebras satisfy a certain weak join decomposition property: being a category of internal monoids (in~$\CoAlg_{\K,\coc}$), the category $\BiAlg_{\K,\coc}$ is \defn{unital} in the sense of~\cite{Borceux-Bourn}. Given an object $Y$, it is said to be a \defn{unital object}~\cite{MRVdL-TCOGAM} when every split extension of the type
\[
\xymatrix{X \ar@<-.5ex>[r]_-{\langle 1_{X},0\rangle} & X\times Y \ar@<-.5ex>[l]_-{\pi_{X}} \ar@<.5ex>[r]^-{\pi_{Y}} & Y \ar@{->}@<.5ex>[l]^-{\langle 0,1_{Y}\rangle}}
\]
is strong. Notice how this condition is symmetric in $X$ and $Y$. So protomodular objects are always unital of course, but in fact this condition is weak enough to be satisfied by all cocommutative bialgebras over $\K$.

Let us now leave the cocommutative setting and ask ourselves what it means for an object~$Y$ in~$\BiAlg_{\K}$ to be unital---a very weak thing to ask, compared with the condition that all split extensions over $Y$ are (stably) strong. 

\begin{proposition}
If $Y$ is a unital object of $\BiAlg_{\K}$, then for every object $X$ we have an isomorphism $X\times Y\cong X\tensor Y$.
\end{proposition}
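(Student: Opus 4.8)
The plan is to exhibit a concrete isomorphism, namely the canonical comparison morphism $\phi\colon X\tensor Y\to X\times Y$ into the product, and to argue that the only obstruction to its being invertible in general — failure of surjectivity — is exactly what unitality of $Y$ removes. First I would collect the morphisms involved. The linear maps $1_X\tensor\varepsilon_Y\colon X\tensor Y\to X$ and $\varepsilon_X\tensor 1_Y\colon X\tensor Y\to Y$ are morphisms of bialgebras (a routine check on underlying linear maps, using that counits are multiplicative and grouplike-preserving), so the universal property of the product yields $\phi=\langle 1_X\tensor\varepsilon_Y,\ \varepsilon_X\tensor 1_Y\rangle\colon X\tensor Y\to X\times Y$. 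Dually, $1_X\tensor\eta_Y\colon X\to X\tensor Y$ and $\eta_X\tensor 1_Y\colon Y\to X\tensor Y$ are bialgebra morphisms, and composing with the projections $\pi_X$ and $\pi_Y$ one checks $\phi\comp(1_X\tensor\eta_Y)=\langle 1_X,0\rangle$ and $\phi\comp(\eta_X\tensor 1_Y)=\langle 0,1_Y\rangle$.

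Next I would show that $\phi$ is a monomorphism. The subtle point is that, once cocommutativity is dropped, $X\tensor Y$ is \emph{no longer} the categorical product of $X$ and $Y$ in $\BiAlg_{\K}$; nevertheless the linear map $r=(\pi_X\tensor\pi_Y)\comp\Delta_{X\times Y}\colon X\times Y\to X\tensor Y$ — which is a morphism of algebras but in general not of coalgebras, hence not a morphism of bialgebras — still satisfies $r\comp\phi=1_{X\tensor Y}$. Indeed, since $\phi$ is a morphism of coalgebras, $r\comp\phi$ equals $\bigl((1_X\tensor\varepsilon_Y)\tensor(\varepsilon_X\tensor 1_Y)\bigr)\comp\Delta_{X\tensor Y}$, and a short computation with the counit axioms shows this is the identity. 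Therefore $\phi$ is injective on underlying vector spaces.

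Finally I would bring in unitality. Using that images of bialgebra morphisms are sub-bialgebras, factor $\phi$ in $\BiAlg_{\K}$ as a surjection $X\tensor Y\to M$ followed by a monomorphism $m\colon M\to X\times Y$. By the first paragraph, both $\langle 1_X,0\rangle$ and $\langle 0,1_Y\rangle$ factor through $m$; since $Y$ is a unital object, the pair $(\langle 1_X,0\rangle,\langle 0,1_Y\rangle)$ is jointly extremally epimorphic, so $m$ is an isomorphism and $\phi$ is surjective. Combined with the previous paragraph, $\phi$ is bijective on underlying vector spaces, and a bijective morphism of bialgebras is an isomorphism; hence $\phi\colon X\tensor Y\to X\times Y$ is the claimed isomorphism.

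I expect the monomorphism step to be the only genuinely delicate part: one has to notice that injectivity of the comparison morphism survives the loss of cocommutativity even though the universal property of the product does not, the crux being that the retraction $r$ exists merely linearly (not as a bialgebra map). The remaining ingredients — that $1_X\tensor\varepsilon_Y$, $1_X\tensor\eta_Y$ and their siblings are bialgebra morphisms, that $\BiAlg_{\K}$ admits (surjection, injection)-factorizations with images sub-bialgebras, and that bijective bialgebra morphisms are isomorphisms — are all standard.
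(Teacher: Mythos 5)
Your proof is correct, and its overall architecture---the comparison morphism $\langle 1_X\tensor\varepsilon_Y,\varepsilon_X\tensor 1_Y\rangle$ (up to the unitors $\rho_X$, $\lambda_Y$, which the paper keeps explicit), the counit computation, and the final appeal to unitality---is the same as the paper's. Where you genuinely diverge is in how the monomorphism step is handled. The paper argues with generalized elements: for every bialgebra map $h\colon Z\to X\tensor Y$ it proves $h=(f\tensor g)\comp\Delta_Z$, where $f$ and $g$ are the composites with the two projections, so that $h$ is determined by $\pi_X\comp m\comp h$ and $\pi_Y\comp m\comp h$. Your retraction identity $r\comp\phi=1_{X\tensor Y}$ is precisely that computation specialized to $h=1_{X\tensor Y}$, but you draw from it the stronger conclusion that $\phi$ is a split monomorphism of vector spaces, hence injective. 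This is a genuine sharpening: the paper remarks that the comparison morphism ``is almost never an injection'' and for that reason avoids the injectivity route, yet your $r$---whose verification uses only that $\phi$ is a coalgebra map and the counit axioms---shows it is always injective; the paper's remark is better read as saying the comparison map is almost never a \emph{bijection} (it is almost never surjective, e.g.\ for tensor algebras), which is consistent with everything both arguments actually use. What your version buys is a concrete linear inverse-on-the-image and a clean endgame: unitality applied to the image inclusion gives surjectivity, and a bijective bialgebra morphism is an isomorphism. What the paper's version buys is independence from injectivity considerations altogether. One small economy you could make: once $\phi$ is known to be a monomorphism, the image factorization is unnecessary---both $\langle 1_X,0\rangle$ and $\langle 0,1_Y\rangle$ factor through $\phi$ itself, so unitality applied directly to $\phi$ already makes it an isomorphism, which is how the paper concludes. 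All the auxiliary facts you invoke (that $1_X\tensor\varepsilon_Y$ and its siblings are bialgebra morphisms, that images of bialgebra maps are sub-bialgebras, that injective bialgebra maps are monomorphisms) are correct and standard.
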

\begin{proof}
Given any bialgebra $X$ we may consider the diagram
\[
\xymatrix@=3em{X & \ar[l]_-{\rho_{X}}^-{\cong} X\tensor \K \ar@<-.5ex>[r]_-{1_{X}\tensor \eta_{Y}} & X\tensor Y \ar@<-.5ex>[l]_-{1_{X}\tensor \varepsilon_{Y}} \ar@<.5ex>[r]^-{\varepsilon_{X}\tensor 1_{Y}} & \K\tensor Y \ar[r]^-{\lambda_{Y}}_-{\cong} \ar@<.5ex>[l]^-{\eta_{X}\tensor 1_{Y}} & Y. }
\]
We are first going to prove that the comparison morphism
\[
m=\langle \rho_{X}\circ(1_{X}\tensor \varepsilon_{Y}),\lambda_{Y}\circ(\varepsilon_{X}\tensor 1_{Y})\rangle \colon X\tensor Y\to X\times Y
\]
is a monomorphism. 

Note that it is almost never an injection; for instance, taking $X=Y$ to be a tensor algebra $T(V)$ (with counit $\varepsilon_{T(V)}(v)=0$ for $v\in V$) yields easy counterexamples. However, in the category $\BiAlg_{\K}$, monomorphisms need not be injective \cite{NuTo,Agore2}. 

Let $h\colon {Z\to X\tensor Y}$ be a morphism of bialgebras. We write 
\[
f=\rho_{X}\circ (1_{X}\tensor \varepsilon_{Y})\circ h\colon Z\to X
\qquad
\text{and}
\qquad
g=\lambda_{Y}\circ (\varepsilon_{X}\tensor 1_{Y})\circ h\colon Z\to Y.
\]
It suffices to prove that $h=(f\tensor g)\circ \Delta_{Z}$ \emph{as vector space maps} for our claim to hold. Indeed, if $h$ and $h'$ induce the same $f$ and $g$, then the given equality of vector space maps proves that $h=h'$. 

Since $h$ is a coalgebra map, we have that $\Delta_{X\tensor Y}\circ h = (h\tensor h)\circ \Delta_{Z}$. Writing $\tau_{X,Y}\colon{X\tensor Y\to Y\tensor X}$ for the twist map, we calculate:
\begin{align*}
&(f\tensor g)\circ\Delta_{Z} \\
&= (\rho_{X}\tensor \lambda_{Y})\circ (1_{X}\tensor \varepsilon_{Y}\tensor \varepsilon_{X}\tensor 1_{Y}) \circ (h\tensor h)\circ\Delta_{Z}\\
&= (\rho_{X}\tensor \lambda_{Y})\circ (1_{X}\tensor \varepsilon_{Y}\tensor \varepsilon_{X}\tensor 1_{Y}) \circ \Delta_{X\tensor Y}\circ h\\
&= (\rho_{X}\tensor \lambda_{Y})\circ (1_{X}\tensor \varepsilon_{Y}\tensor \varepsilon_{X}\tensor 1_{Y}) \circ (1_{X}\tensor \tau_{X,Y}\tensor 1_{Y})\circ(\Delta_{X}\tensor \Delta_{Y})\circ h\\
&= (\rho_{X}\tensor \lambda_{Y})\circ (1_{X}\tensor \varepsilon_{X}\tensor \varepsilon_{Y}\tensor 1_{Y}) \circ(\Delta_{X}\tensor \Delta_{Y})\circ h\\
&= (\rho_{X}\tensor \lambda_{Y})\circ (\rho_{X}^{-1}\tensor \lambda_{Y}^{-1})\circ h=h.
\end{align*}

It follows that $m$ is a monomorphism. Moreover, $m$ makes the diagram
\[
\xymatrix@!0@R=4em@C=6em{ & X\tensor Y \ar@{ >->}[d]^-m \\
 X \ar[r]_-{\langle 1_{X},0\rangle} \ar[ur]^-{(1_{X}\tensor \eta_{Y})\circ \rho_{X}^{-1}} & X\times Y & Y \ar[l]^-{\langle 0,1_{Y}\rangle} \ar[ul]_-{(\eta_{X}\tensor 1_{Y})\circ \lambda_{Y}^{-1}}}
\]
commute. The assumption that $Y$ is unital tells us that $m$ is an isomorphism. 
\end{proof}

This immediately implies that any unital object $Y$ in $\BiAlg_{\K}$ has to be cocommutative, since $\Delta_{Y}\colon Y\to Y\tensor Y$ is the morphism of bialgebras $\langle 1_{Y},1_{Y}\rangle\colon Y\to Y\times Y$. In particular, the category $\BiAlg_{\K}$ is not unital, so it cannot be protomodular, and not even Mal'tsev~\cite{Borceux-Bourn}. 

However, the situation is actually much worse, since it almost never happens that $X\tensor Y$ is the product of $X$ in $Y$ in the category of all $\K$-bialgebras---not even when both $X$ and $Y$ are cocommutative. In fact, $\K$ itself cannot be a protomodular object in $\BiAlg_{\K}$, since this would imply that all objects of $\BiAlg_{\K}$ are unital~\cite{MRVdL-TCOGAM}. As we have just seen, this is manifestly false. 

The same holds for the category $\Hopf_{\K}$ of Hopf algebras over $\K$. At first this may seem to contradict results in~\cite{Molnar} on split extensions of Hopf algebras. We must keep in mind, though, that for a Hopf algebra $H$, the map $\langle 1_{H},0\rangle$ in the  diagram
\[
\xymatrix{H \ar@<-.5ex>[r]_-{\langle 1_{H},0\rangle} & H\times H \ar@<-.5ex>[l]_-{\pi_{1}} \ar@<.5ex>[r]^-{\pi_{2}} & H \ar@{->}@<.5ex>[l]^-{\langle 0,1_{H}\rangle}}
\]
is the kernel of $\pi_{2}$, but $\pi_{2}$ need not be its cokernel, unless $H$ is cocommutative. Hence this diagram does not represent a short exact sequence, and so neither Theorem~4.1 nor Theorem~4.2 in~\cite{Molnar} saying that $H\times H\cong H\tensor H$ applies.

We conclude that it makes no sense to study protomodular objects in $\BiAlg_{\K}$ or in $\Hopf_{\K}$, and we thus restrict our attention to the cocommutative case. 

\section*{Acknowledgements}
Thanks to Jos\'e Manuel Fern\'andez Vilaboa, Isar Goyvaerts, Marino Gran, James R.~A.~Gray, Gabriel Kadjo and Joost Vercruysse for fruitful discussions and useful comments. We would also like to thank the University of Cape Town and Stellenbosch University for their kind hospitality during our stay in South Africa.


\providecommand{\noopsort}[1]{}
\providecommand{\bysame}{\leavevmode\hbox to3em{\hrulefill}\thinspace}
\providecommand{\MR}{\relax\ifhmode\unskip\space\fi MR }
\providecommand{\MRhref}[2]{%
  \href{http://www.ams.org/mathscinet-getitem?mr=#1}{#2}
}
\providecommand{\href}[2]{#2}

\end{document}